\theoremstyle{plain}
\newtheorem{thm}{\protect\theoremname}
\theoremstyle{remark}
\newtheorem*{rem*}{\protect\remarkname}
\theoremstyle{plain}
\newtheorem{lem}[thm]{\protect\lemmaname}
\theoremstyle{definition}
\newtheorem{example}[thm]{\protect\examplename}
\providecommand{\examplename}{Example}
\providecommand{\lemmaname}{Lemma}
\providecommand{\remarkname}{Remark}
\providecommand{\theoremname}{Theorem}
\begin{document}
\title{A Generalization of the Erd\H{o}s-Kac Theorem}
\author{Joseph Squillace\\
University of Rhode Island\\
Department of Computer Science $\&$ Statistics}
\begin{abstract}
Given $n\in\mathbb{N}$, let $\omega\left(n\right)$ denote the number
of distinct prime factors of $n$, let $Z$ denote a standard normal
variable, and let $P_{n}$ denote the uniform distribution on $\left\{ 1,\ldots,n\right\} $.
The Erd\H{o}s-Kac Theorem states that $$P_{n}\left(m\le n:\omega\left(m\right)-\log\log n\le x\left(\log\log n\right)^{1/2}\right)\to\mathbb{P}\left(Z\le x\right)$$
as $n\to\infty$; i.e., if $N\left(n\right)$ is a uniformly distributed variable on $\lbrace 1,\ldots,n \rbrace$, then $\omega\left(N\left(n\right)\right)$ is asymptotically normally distributed as $n\to \infty$ with both mean and variance equal to $\log \log n$. The contribution of this paper is a generalization of the Erd\H{o}s-Kac Theorem to a larger class
of random variables by considering perturbations of the uniform probability
mass $\frac{1}{n}$ in the following sense. Denote by $\mathbb{P}_{n}$
a probability distribution on $\left\{ 1,\ldots,n\right\} $ given
by $\mathbb{P}_{n}\left(i\right)=\frac{1}{n}+\varepsilon_{i,n}$.
By providing some constraints on the $\varepsilon_{i,n}$'s, sufficient conditions are stated in order
to conclude that $$\mathbb{P}_{n}\left(m\le n:\omega\left(m\right)-\log\log n\le x\left(\log\log n\right)^{1/2}\right) \to \mathbb{P}\left(Z\le x\right)$$ as $n\to\infty.$

The main result will be applied to prove that the number of distinct
prime factors of a positive integer with either the Harmonic$\left(n\right)$
distribution or the Zipf$\left(n,s\right)$ distribution also tends to the normal distribution $\mathcal{N}\left(\log\log n,\log\log n\right)$ as $n\to\infty$ (and as $s\to1$ in the case of a Zipf variable).
\end{abstract}

\maketitle

\section{Introduction}

Given a natural number $n$, the number of distinct prime factors
of $n$ is denoted $\omega\left(n\right)$. For example, $\omega\left(2^{3}5^{2}7\right)=3$.
The function $\omega$ may be written as $\omega\left(n\right)=\sum_{p\vert n}1$, where the sum is over all prme factors of $n$. In 1917, Hardy and Ramanujan (p. 270 of \cite{key-3}) proved
that the number of distinct prime factors of a natural number $n$
is about $\log\log n$. In particular, they showed that the normal
order of $\omega\left(n\right)$ is $\log\log n-$i.e., for every
$\varepsilon>0$, the proportion of the natural numbers for which
the inequalities 
\[
\left(1-\varepsilon\right)\log\log n\le\omega\left(n\right)\le\left(1+\varepsilon\right)\log\log n
\]
do not hold tends to $0$ as $n\to\infty$. Informally speaking, the
Erd\H{o}s-Kac Theorem generalizes the Hardy-Ramanujan Theorem by showing
that $\omega\left(n\right)$ is about $\log\log n+Z\cdot\sqrt{\log\log n}$,
where $Z\sim\mathcal{N}\left(0,1\right)$. More precisely, the Erd\H{o}s-Kac Theorem is the following result (p. 738 of \cite{key-2}).
\begin{thm}
Let $P_{n}$ denote the uniform distribution\footnote{I.e., if $U$ is uniformly distributed on $\left\{ 1,2,\ldots,n\right\} $,
then for any subset $A\subseteq\left\{ 1,2,\ldots,n\right\}$, $P_{n}\left(A\right)=\mathbb{P}\left(U\in A\right)$.} on $\left\{ 1,2,,\ldots,n\right\} $. As $n\to\infty$, 
\[
P_{n}\left(m\le n:\omega\left(m\right)-\log\log n\le x\left(\log\log n\right)^{1/2}\right)\to\mathbb{P}\left(Z\le x\right).
\]
\end{thm}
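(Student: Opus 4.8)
The plan is to prove Theorem 1 by the method of moments, after replacing $\omega$ by a truncated version that can be compared with a sum of independent Bernoulli variables. Write $\omega(m)=\sum_{p\le n}X_p(m)$ with $X_p(m)=\mathbf{1}\{p\mid m\}$, write $\mathbb{E}_{P_n}$ for expectation under $P_n$, fix the cut-off $y=y(n)=n^{1/\log\log n}$, and set $\omega_y(m)=\sum_{p\le y}X_p(m)$. Everything below rests on the identity $\sum_{m\le n}\mathbf{1}\{d\mid m\}=\lfloor n/d\rfloor$, which gives, for any squarefree $d$ all of whose prime factors are $\le y$,
\[
\mathbb{E}_{P_n}\!\left[\prod_{p\mid d}X_p\right]=\frac1n\left\lfloor\frac nd\right\rfloor=\frac1d+O\!\left(\frac1n\right);
\]
in other words the family $(X_p)_{p\le y}$ behaves, up to an error of size $O(1/n)$ in each joint moment, like independent variables $B_p\sim\mathrm{Bernoulli}(1/p)$.

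\emph{Truncation and reduction.} Averaging the identity over $y<p\le n$ and invoking Mertens' theorem gives $\mathbb{E}_{P_n}[\omega-\omega_y]=\sum_{y<p\le n}\bigl(\tfrac1p+O(\tfrac1n)\bigr)=\log\log n-\log\log y+o(1)$; since $\log\log y=\log\log n-\log\log\log n$ for the chosen $y$, this is $O(\log\log\log n)=o\bigl(\sqrt{\log\log n}\bigr)$, so by Markov's inequality $\bigl(\omega(m)-\omega_y(m)\bigr)\big/\sqrt{\log\log n}\to0$ in $P_n$-probability. Mertens' theorem also gives $\mu_y:=\sum_{p\le y}\tfrac1p=\log\log n+O(\log\log\log n)$ and $\sigma_y^2:=\sum_{p\le y}\tfrac1p\bigl(1-\tfrac1p\bigr)=\log\log n+O(1)$, so $\sigma_y/\sqrt{\log\log n}\to1$ and $(\mu_y-\log\log n)/\sqrt{\log\log n}\to0$. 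Hence, once we show that $(\omega_y(m)-\mu_y)/\sigma_y\Rightarrow Z$ under $P_n$, two applications of Slutsky's theorem — first to switch to the normalization $(\log\log n,\sqrt{\log\log n})$, then to add back the negligible tail $\omega-\omega_y$ — yield $\bigl(\omega(m)-\log\log n\bigr)/\sqrt{\log\log n}\Rightarrow Z$, which is the assertion of Theorem 1 (the limiting distribution function being continuous, weak convergence gives the stated pointwise convergence for every $x$).

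\emph{The moment computation.} Since the standard normal law is determined by its moments, it is enough to prove $\mathbb{E}_{P_n}\bigl[(\omega_y-\mu_y)^k\bigr]\big/\sigma_y^{k}\to\mathbb{E}[Z^k]$ for every fixed $k\ge0$. Expanding $(\omega_y-\mu_y)^k=\bigl(\sum_{p\le y}(X_p-\tfrac1p)\bigr)^k$ gives at most $\pi(y)^k\le y^k$ monomials; using $X_p^a=X_p$ and the displayed identity, each monomial has $P_n$-expectation equal to the corresponding mixed moment of the independent family $(B_p)$ up to $O(1/n)$, so $\mathbb{E}_{P_n}[(\omega_y-\mu_y)^k]=\mathbb{E}[(S_y-\mu_y)^k]+O\bigl(y^k/n\bigr)=\mathbb{E}[(S_y-\mu_y)^k]+o(1)$, where $S_y=\sum_{p\le y}B_p$ and $y^k/n=n^{k/\log\log n-1}\to0$. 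It remains to run the classical moment calculation for the normalized sum $(S_y-\mu_y)/\sigma_y$ of independent, uniformly bounded, mean-zero summands with $\sigma_y^2\to\infty$: expanding $\mathbb{E}[(S_y-\mu_y)^{2r}]$, a multi-index term vanishes unless every index present occurs at least twice, the perfect-matching terms contribute $(2r-1)!!\,\sigma_y^{2r}(1+o(1))$, and all remaining terms have strictly fewer free summation indices and so are $o(\sigma_y^{2r})$; the same bookkeeping shows odd moments are $o(\sigma_y^{2r+1})$. Thus $\mathbb{E}[(S_y-\mu_y)^k]/\sigma_y^k\to\mathbb{E}[Z^k]$, and hence so does $\mathbb{E}_{P_n}[(\omega_y-\mu_y)^k]/\sigma_y^k$, completing the proof.

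I expect the genuine obstacle to be the calibration of $y$. It must be small enough that for \emph{every} fixed $k$ the $\pi(y)^k$ accumulated errors of size $O(1/n)$ in the moment computation still tend to $0$ — which forces $y=n^{o(1)}$ — yet large enough that the primes discarded in $(y,n]$ shift the mean by only $o(\sqrt{\log\log n})$ and are negligible in probability; the choice $y=n^{1/\log\log n}$ is made precisely to meet both demands, and verifying this (along with the Mertens-type asymptotics $\mu_y,\sigma_y^2\sim\log\log n$ and the estimate $\log\log n-\log\log y=\log\log\log n$) is where the number-theoretic input is used. Everything else — the two Slutsky reductions, the determinacy of the normal moment problem, and the pairing computation for $S_y$ — is standard probability.
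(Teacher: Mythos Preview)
Your proof is correct and follows essentially the same route the paper takes: the paper does not prove Theorem~1 separately but obtains it as the special case $\varepsilon_{i,n}\equiv0$ of Theorem~2, whose proof (following Durrett) uses exactly your ingredients---the cutoff $\alpha_n=n^{1/\log\log n}$, the reduction to the truncated count via the bound $\sum_{\alpha_n<p\le n}1/p=o((\log\log n)^{1/2})$, comparison of moments with independent Bernoulli variables through $\lfloor n/d\rfloor/n=1/d+O(1/n)$, and the estimate $\alpha_n^r/n\to0$ to control the accumulated error. The only cosmetic differences are that the paper compares raw moments of $g_n$ and $S_n$ and then invokes Durrett's Theorem~3.4.5 for the CLT and moment convergence of $(S_n-b_n)/a_n$, whereas you compare centered moments directly and carry out the pairing computation for $S_y$ yourself.
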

\medskip{}
Figure 1 shows plots of the values of $\omega\left(n\right)$ for $n$ between, respectively, 1-100, 1-1000, and 1-10000. 
\begin{figure}[H]
\caption{Some values of $\omega\left(n\right)$ plotted using Mathematica.}
\centering{}\includegraphics[width=5cm,height=5cm,keepaspectratio]{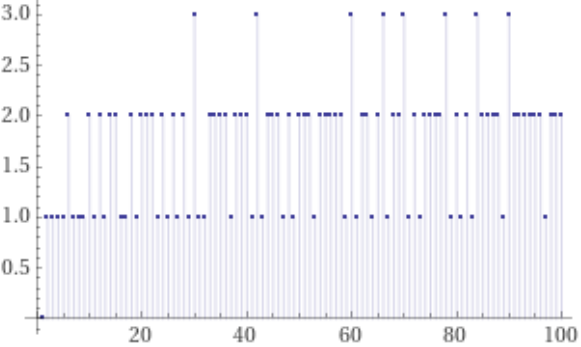}\includegraphics[width=5cm,height=5cm,keepaspectratio]{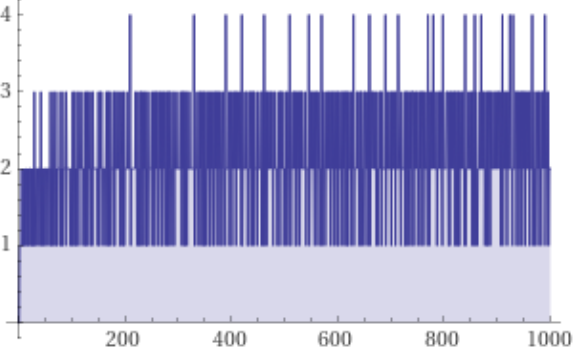}\includegraphics[width=5cm,height=5cm,keepaspectratio]{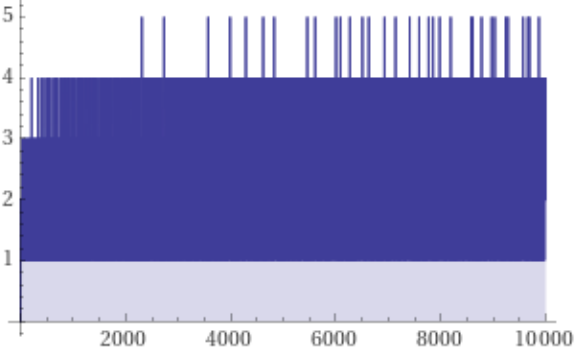}
\end{figure}
\noindent Furthermore, Figure 1 illustrates how slowly the values of $\omega\left(n\right)$ grow along with their variation, and this is consistent with the parameters $\mu=\sigma^2=\log \log n$ appearing in the limiting normal distribution. Theorem 1 suggests that, for example, since $\log\log\left(e^{\left(e^4\right)}\right)=4$,
integers near $e^{\left(e^4\right)}\approx 514843556263457212366848$ have, on average, $4$ distinct
prime factors. Using Mathematica, computing the mean of $\omega\left(n\right)$ for $n$ within 1000000 of $514843556263457212366848$ yields
\[
\text{N[Mean[Table[PrimeNu[n], {n, 514843556263457211366848, 514843556263457213366848}]], 3]}=4.27.
\]
Several generalizations of Theorem 1 exist. For example, Liu \cite{key-4}
extends Theorem 1 to the setting of free abelian modules other than
the positive integers. A recent generalization of Theorem 1 was considered
in \cite{key-7}, where Sun and Wu showed that Theorem 1 corresponds
to a special case of Theorem 1 of \cite{key-7} once the parameter
$l$ is set to $0$. In particular, they considered integrals of
the form $\int_{-\infty}^{z}v^{l}e^{\frac{-v^{2}}{2}}\text{d}v$ and
bounds for sums of the form 
\[
\frac{1}{x}\sum_{n\le x,\frac{\omega\left(n\right)-\log\log n}{\sqrt{\log\log n}}\le z}\left(\frac{\omega\left(n\right)-\log\log x}{\sqrt{\log\log x}}\right)^{l};
\]
upon setting $l=0$, these expressions become the CDF of the standard normal
distribution and the proportion of $n$ satisfying $\frac{\omega\left(n\right)-\log\log n}{\sqrt{\log\log n}}\le z$,
respectively. A generalization given by Saidak \cite{key-6}, assumes a quasi Generalized
Riemann Hypothesis for Dedekind zeta functions and concerns a function
$f_{a}\left(p\right)$ defined to be the minimal $e$ for which $a^{e}\equiv1\mod p$.
Saidak shows that the limit of $$\frac{\left\{ p\le x:A\le\frac{\omega\left(f_{a}\left(p\right)\right)-\log\log p}{\sqrt{\log\log p}}\le B\right\} }{\pi\left(x\right)}$$
as $x\to\infty$ tends to the standard normal CDF $\Phi\left(B\right)-\Phi\left(A\right)$.
There are several generalizations of Theorem 1 to algebraic number
fields, such as \cite{key-5} where Pollack considers the number of
principal ideals. A similarity in the generalizations mentioned is that they consider
limiting distributions of $\omega\left(\cdot\right)$ for uniformly distributed random integers.
The contribution of this paper is to extend the Erd\H{o}s-Kac Theorem to
a larger class of random variables, other than a uniformly distributed
variable, on $\left[n\right]\coloneqq\left\{ 1,2,\ldots,n\right\}$
which also have, asymptotically, $\log\log n+Z\cdot\sqrt{\log\log n}$ many distinct prime factors.

\subsection{The Main Theorem}

\noindent Define a probability distribution $\mathbb{P}_{n}$
on $\left[n\right]$ given by 
\begin{equation}
\mathbb{P}_{n}\left(i\right)=\left(\frac{1}{n}+\varepsilon_{i,n}\right),\label{eq:1}
\end{equation}
and impose the constraint that for each $k$-tuple $\left(p_{1},\ldots,p_{k}\right)$
consisting of distinct primes, 
\begin{equation}
\lim_{n\to\infty}\sum_{l=1}^{\left\lfloor \frac{n}{p_{1}\cdots p_{k}}\right\rfloor }\varepsilon_{lp_{1}\cdots p_{k},n}=0.\label{eq:2}
\end{equation}
The constraint $\left(\ref{eq:2}\right)$ is applied in $\S$2 where
an analogue of Kac's heuristic for $\mathbb{P}_{n}$ is provided, and
the analogue of Kac's heuristic suggests that for a $\mathbb{P}_{n}$-distributed variable $X$,
the events $\left\{p_1\text{ divides }X\right\},\ldots,\left\{p_k\text{ divides }X\right\}$
are independent when $p_1,\ldots p_k$ are distinct primes. Analogous to the
case of the development of Theorem 1, it was the independence of these
events that suggested a Gaussian law of errors. Moreover, it will be shown that $\left(\ref{eq:2}\right)$ implies $\lim_{n\to\infty}\mathbb{P}_{n}\left(p\text{ divides }X\right)=\frac{1}{p}$ (and it is easy to show that $\lim_{n\to\infty}P_{n}\left(p\text{ divides }X\right)=\frac{1}{p}$).

Due to the axioms of probability, the $\varepsilon_{i,n}$'s satisfy
\begin{align}
\sum_{i=1}^{n}\varepsilon_{i,n}= & 0,\label{eq:3}\\
\varepsilon_{i,n}\in & \left[-\frac{1}{n},1-\frac{1}{n}\right].\label{eq:4}
\end{align}
The motivation for defining $\mathbb{P}_{n}$ in terms of the uniform
distribution is due to Durrett's proof (Theorem 3.4.16 in \cite{key-1})
of the Erd\H{o}s-Kac Theorem. Replacing the uniform distribution
$P_{n}$ with the new distribution $\mathbb{P}_{n}$ in Durrett's
proof naturally yields some constraints that the terms $\varepsilon_{i,n},i\le n,$
must satisfy in order to conclude that an integer-valued random variable with the
$\mathbb{P}_{n}$ distribution has about $\log\log n+Z\cdot\sqrt{\log\log n}$
distinct prime factors. Our main result is the following theorem,
where $\left\lfloor \cdot\right\rfloor $ denotes the floor function.
\begin{thm}
Let $Z\sim\mathcal{N}\left(0,1\right)$. Suppose the following statements
are true.

\begin{itemize}
\item There exists a constant $C$ such that for all $n$ and for all primes
$p$ with $p>n^{1/\log\log n}$,
\begin{equation}
\sum_{l=1}^{\left\lfloor \frac{n}{p}\right\rfloor }\varepsilon_{lp,n}\le\frac{C}{p}.\label{eq:5}
\end{equation}
\item There exists a constant $D$ such that 
\begin{equation}
0\le\sum_{l=1}^{\left\lfloor \frac{n}{p_{1}\cdots p_{k}}\right\rfloor }\varepsilon_{lp_{1}\cdots p_{k},n}\le\frac{D}{n}\label{eq:6}
\end{equation}
for all $n$ and, for each $k$, all $k$-tuples $\left(p_{1},\ldots,p_{k}\right)$
consisting of distinct primes of size at most $n^{1/\log\log n}$. 
\end{itemize}
Let $\mathbb{P}_{n}^{*}$ denote a probability distribution obtained
by imposing the constraints $\left(\ref{eq:5}\right)$ and $\left(\ref{eq:6}\right)$
on $\mathbb{P}_{n}$. As $n\to\infty,$ 
\begin{align*}
\mathbb{P}_{n}^{*}\left(m\le n:\omega\left(m\right)-\log\log n\le x\left(\log\log n\right)^{1/2}\right) & \to\mathbb{P}\left(Z\le x\right).
\end{align*}
\end{thm}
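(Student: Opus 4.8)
The plan is to mirror Durrett's proof of the classical Erd\H{o}s--Kac theorem (Theorem 3.4.16 of \cite{key-1}), replacing the uniform law $P_n$ by $\mathbb{P}_n^{*}$, and to verify that hypotheses $(\ref{eq:5})$ and $(\ref{eq:6})$ are exactly what is needed to keep every error term harmless. Throughout, write $p_n := n^{1/\log\log n}$, so $\log p_n = \tfrac{\log n}{\log\log n}$ and, by Mertens' theorem, $\mu_n := \sum_{p\le p_n}\tfrac1p = \log\log n - \log\log\log n + O(1)$; set also $\sigma_n^2 := \sum_{p\le p_n}\tfrac1p\bigl(1-\tfrac1p\bigr) = \mu_n + O(1)$, so that $\mu_n/\log\log n\to 1$ and $\sigma_n^2/\log\log n\to 1$. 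For $m\le n$ decompose $\omega(m) = g(m) + h(m)$, where $g(m) := \#\{p\le p_n : p\mid m\}$ counts the ``small'' prime divisors and $h(m) := \#\{p>p_n : p\mid m\}$ the ``large'' ones; since a product of more than $\log\log n$ primes exceeding $p_n$ would exceed $p_n^{\log\log n} = n$, we have $0\le h(m)\le\log\log n$ deterministically.

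\textbf{Moment comparison for $g$.} Writing $\delta_p(m) := \mathbf 1_{p\mid m}$, equation $(\ref{eq:1})$ gives, for distinct primes $q_1,\dots,q_j$ with $q_1\cdots q_j\le n$,
$$
\mathbb{E}_{\mathbb{P}_n^{*}}[\delta_{q_1}\cdots\delta_{q_j}] = \frac{\lfloor n/(q_1\cdots q_j)\rfloor}{n} + \sum_{l=1}^{\lfloor n/(q_1\cdots q_j)\rfloor}\varepsilon_{l q_1\cdots q_j,\,n} = \frac{1}{q_1\cdots q_j} + O\!\Bigl(\tfrac{1+D}{n}\Bigr),
$$
the last step using $\lfloor x\rfloor = x+O(1)$ together with the upper bound in $(\ref{eq:6})$. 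Fix $k\in\mathbb{N}$ and expand $\mathbb{E}_{\mathbb{P}_n^{*}}[g^k] = \sum_{p_1,\dots,p_k\le p_n}\mathbb{E}_{\mathbb{P}_n^{*}}[\delta_{p_1}\cdots\delta_{p_k}]$; grouping each $k$-tuple by its set of distinct entries $\{q_1,\dots,q_j\}$ (which satisfies $q_1\cdots q_j\le p_n^{k} = n^{k/\log\log n} < n$ for $n$ large, $k$ being fixed), and comparing with the same expansion for $S_n' := \sum_{p\le p_n}X_p$, a sum of \emph{independent} $X_p\sim\mathrm{Bernoulli}(1/p)$, yields
$$
\mathbb{E}_{\mathbb{P}_n^{*}}[g^k] = \mathbb{E}[(S_n')^{k}] + O\!\Bigl(\tfrac{\pi(p_n)^k}{n}\Bigr) = \mathbb{E}[(S_n')^{k}] + O\bigl(n^{k/\log\log n - 1}\bigr).
$$
The error is $n^{-1+o(1)}$, which dominates any fixed power of $\log\log n$, so forming the centered $k$-th moment (a fixed combination of the $\mathbb{E}_{\mathbb{P}_n^{*}}[g^i]$ against powers of $\mu_n = O(\log\log n)$) gives $\mathbb{E}_{\mathbb{P}_n^{*}}[(g-\mu_n)^k] = \mathbb{E}[(S_n'-\mu_n)^{k}] + o(1)$.

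\textbf{From moments to the theorem.} Since the $X_p$ are uniformly bounded and $\sigma_n\to\infty$, a standard computation (as in \cite{key-1}: pairings dominate, higher multiplicities are lower order) gives $\mathbb{E}[(S_n'-\mu_n)^{k}]/\sigma_n^{k}\to\mathbb{E}[Z^k]$ for every $k$, whence $\mathbb{E}_{\mathbb{P}_n^{*}}\bigl[((g-\mu_n)/\sigma_n)^k\bigr]\to\mathbb{E}[Z^k]$; as the normal moments determine the normal law, $(g-\mu_n)/\sigma_n\Rightarrow Z$ under $\mathbb{P}_n^{*}$. For the large primes, $(\ref{eq:1})$ and $(\ref{eq:5})$ give $\mathbb{E}_{\mathbb{P}_n^{*}}[\delta_p] = \lfloor n/p\rfloor/n + \sum_{l\le\lfloor n/p\rfloor}\varepsilon_{lp,n}\le \tfrac1p+\tfrac Cp$ for each prime $p\in(p_n,n]$, so by Mertens' theorem
$$
\mathbb{E}_{\mathbb{P}_n^{*}}[h] = \sum_{p_n<p\le n}\mathbb{E}_{\mathbb{P}_n^{*}}[\delta_p] \le (1+C)\sum_{p_n<p\le n}\frac1p = (1+C)\bigl(\log\log\log n + o(1)\bigr),
$$
and since $h\ge 0$, Markov's inequality yields $h/\sqrt{\log\log n}\to 0$ in $\mathbb{P}_n^{*}$-probability. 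Finally, writing
$$
\frac{\omega(m)-\log\log n}{\sqrt{\log\log n}} = \frac{g-\mu_n}{\sigma_n}\cdot\frac{\sigma_n}{\sqrt{\log\log n}} + \frac{h}{\sqrt{\log\log n}} + \frac{\mu_n-\log\log n}{\sqrt{\log\log n}},
$$
the first factor $\Rightarrow Z$, $\sigma_n/\sqrt{\log\log n}\to 1$, the second summand $\to 0$ in probability, and the third is $O\bigl(\log\log\log n/\sqrt{\log\log n}\bigr)\to 0$; Slutsky's theorem then finishes the proof.

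\textbf{Main obstacle.} The delicate point is the error bookkeeping in the moment comparison: one must check that the cumulative perturbation error of order $\pi(p_n)^k/n$, summed over all $k$-tuples of small primes, genuinely beats the factor $\mu_n^k = O\bigl((\log\log n)^k\bigr)$ picked up from centering --- this is precisely where the shape of the truncation $p_n = n^{1/\log\log n}$ and the $D/n$ scaling in $(\ref{eq:6})$ are used --- and, separately, that the large-prime tail controlled by $(\ref{eq:5})$ together with Mertens is $o(\sqrt{\log\log n})$ rather than merely $O(\sqrt{\log\log n})$.
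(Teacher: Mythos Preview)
Your proof is correct and follows essentially the same route as the paper: truncate at $\alpha_n=n^{1/\log\log n}$, use the first-moment bound from $(\ref{eq:5})$ together with Mertens to show the large-prime tail $h$ is $o\bigl((\log\log n)^{1/2}\bigr)$ in probability, and use $(\ref{eq:6})$ to match the moments of the small-prime count $g$ with those of the independent Bernoulli sum $S_n'=\sum_{p\le\alpha_n}X_p$ up to an error $O(\alpha_n^k/n)=n^{-1+o(1)}$, then invoke the Lindeberg/moment CLT for $S_n'$ and finish by Slutsky. The paper's argument is organized identically (its Lemma~3 is your tail estimate, and its moment comparison is your displayed bound $\mathbb{E}_{\mathbb{P}_n^*}[g^k]=\mathbb{E}[(S_n')^k]+O(\alpha_n^k/n)$), so there is no substantive difference in method.
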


\begin{rem*}
If $\varepsilon_{i,n}=0$ for all $i\le n$, then $\mathbb{P}_{n}^{*}=P_{n}$ and Theorem 1 is obtained.
\end{rem*}

\subsection{Outline}

In $\S2$, Kac's heuristic for the number of distinct
prime factors of a uniformly distributed variable on $\mathbb{N}$ is stated,
and then an analogue of the heuristic is provided for any $\mathbb{P}_{n}$-distributed
random variable. Just as Kac's heuristic suggested the appearance
of the normal distribution in Theorem 1 along with the parameters $\mu = \sigma^2 = \log \log n$, the analogue of Kac's heuristic
for $\mathbb{P}_{n}$ is the motivation for the conclusion of Theorem
2. The proof of Theorem 2 is provided in $\S$3; the proof applies the method of moments and is motivated by Durrett's proof of the Erd\H{o}s-Kac Theorem (Theorem 3.4.16 in
\cite{key-1}). Moreover, in $\S$3, the constraints $\left(\ref{eq:5}\right)$ and $\left(\ref{eq:6}\right)$ are applied to ensure that $\mathbb{P}_n$ also satisfies Durrett's method of moment bounds.
In $\S$4, the values $\varepsilon_{i,n},i\le n,$ are given for the Harmonic distribution
on $\left[n\right]$, and it is shown that the $\varepsilon_{i,n}$'s satisfy
constraints $\left(\ref{eq:2}\right)$, $\left(\ref{eq:5}\right)$,
and $\left(\ref{eq:6}\right)$. Therefore, the conclusion of Theorem
2 is true if $\mathbb{P}_{n}^{*}$ is replaced with the harmonic distribution. In a similar fashion, it is shown that a Zipf$\left(n,s\right)$-distribution
can also replace $\mathbb{P}_{n}^{*}$ in Theorem 2, as long as $s\to1$
as $n\to\infty$. I.e., the number of distinct prime factors of an
integer-valued random variable from the Harmonic$\left(n\right)$
distribution distribution tends to $\mathcal{N}\left(\log\log n,\log\log n\right)$
as $n\to\infty$; and the number of distinct prime factors of an integer-valued
random variable from the Zipf$\left(n,s\right)$ distribution tends
to $\mathcal{N}\left(\log\log n,\log\log n\right)$ as $n\to\infty$
and $s\to1$.

\section{Kac's Heuristic and its Analogue for $\mathbb{P}_{n}$}

Kac's heuristic\footnote{The heuristic is based on the connection between independence and
a Gaussian law of errors.} for the uniform distribution (pp. 154-155 of \cite{key-1}) suggests
the statement of Theorem 1 and is based on the fact that given a random
integer $n\in\mathbb{N},$ the events $\left\{ p\text{ divides }n\right\}$
and $\left\{ q\text{ divides }n\right\} $ are independent when $p$
and $q$ are distinct primes.

Now consider the probability distribution $\mathbb{P}_{n}$ and
its behavior in the limit. Given a prime $p$, let $A_{p}$ denote
the set of positive integers divisible by $p$. Letting $\mathbf{1}_{\left\{ \cdot\right\}}$
denote an indicator random variable,
\begin{align*}
\mathbb{P}_{\infty}\left(A_{p}\right) & \coloneqq\lim_{n\to\infty}\mathbb{P}_{n}\left(A_{p}\right)\\
 & =\lim_{n\to\infty}\sum_{l=1}^{\left\lfloor \frac{n}{p}\right\rfloor }\mathbb{P}_{n}\left(lp\right)\\
 & \overset{\left(\ref{eq:1}\right)}{=}\lim_{n\to\infty}\sum_{l=1}^{\left\lfloor \frac{n}{p}\right\rfloor }\mathbf{1}_{\left\{ lp\le n\right\} }\left(\frac{1}{n}+\varepsilon_{lp,n}\right)\\
 & =\lim_{n\to\infty}\frac{\left\lfloor \frac{n}{p}\right\rfloor }{n}+\lim_{n\to\infty}\sum_{l=1}^{\left\lfloor \frac{n}{p}\right\rfloor }\varepsilon_{lp,n}\\
 & \overset{\left(\ref{eq:2}\right)}{=}\frac{1}{p}.
\end{align*}

\noindent If $q\not=p$ is another prime, then, similarly, 

\noindent 
\begin{align*}
\mathbb{P}_{\infty}\left(A_{p}\cap A_{q}\right) & =\frac{1}{pq}+\sum_{l\ge1}\varepsilon_{pql}\\
 & \overset{\left(\ref{eq:2}\right)}{=}\frac{1}{pq}\\
 & =\mathbb{P}_{\infty}\left(A_{p}\right)\mathbb{P}_{\infty}\left(A_{q}\right).
\end{align*}

\noindent Therefore, the events $A_{p}$ and $A_{q}$ are independent.
In general, $\left(\ref{eq:2}\right)$ ensures that, for any positive
integer $k$, the events $A_{p_{1}},\ldots,A_{p_{l}}$ are independent
for $1<l\le k$.

Let $\delta_{p}\left(n\right)=\mathbf{1}_{p\vert n}$ so that 
\begin{align*}
\omega\left(n\right) & =\sum_{p\le n}\delta_{p}\left(n\right)
\end{align*}
is the number of distinct prime factors of $n$. The indicator variables
$\delta_{p}$ behave like Bernoulli variables $X_{p}$ that are independent
and identically distributed with 
\begin{align*}
\mathbb{P}\left(X_{p}=1\right) & =\frac{1}{p},\\
\mathbb{P}\left(X_{p}=0\right) & =1-1/p.
\end{align*}
The mean of $\sum_{p\le n}X_{p}$ is 
\begin{align*}
\mathbb{E}\left(\sum_{p\le n}X_{p}\right) & =\sum_{p\le n}\mathbb{P}\left(X_{p}=1\right)\\
 & =\sum_{p\le n}\frac{1}{p},
\end{align*}
and the variance of $\sum_{p\le n}X_{p}$ is
\begin{align*}
\text{Var}\left(\sum_{p\le n}X_{p}\right)= & \sum_{p\le n}\text{Var}\left(X_{p}\right)\\
= & \sum_{p\le n}\left(\mathbb{E}\left(X_{p}^{2}\right)-\left(\mathbb{E}\left(X_{p}\right)\right)^{2}\right)\\
= & \sum_{p\le n}\mathbb{E}\left(X_{p}^{2}\right)-\sum_{p\le n}\left(\left(\mathbb{E}\left(X_{p}\right)\right)^{2}\right)\\
= & \sum_{p\le n}\mathbb{E}\left(X_{p}\right)-\sum_{p\le n}\left(\left(\frac{1}{p}\right)^{2}\right)\\
= & \sum_{p\le n}\frac{1}{p}-\sum_{p\le n}\frac{1}{p^{2}}.
\end{align*}
Note that $\sum_{p\le n}\frac{1}{p}=\log\log n+O\left(1\right)$ by
Merten's Second Theorem (p. 22 of \cite{key-8}); the series $\sum_{p\le n}\frac{1}{p^{2}}$
converges (by comparison with $\sum_{1\le n}\frac{1}{n^{2}}$). Therefore,
the mean and variance are 
\begin{align*}
\sum_{p\le n}\frac{1}{p}= & \log\log n+O\left(1\right),\\
\sum_{p\le n}\frac{1}{p}-\sum_{p\le n}\frac{1}{p^{2}}= & \log\log n+O\left(1\right).
\end{align*}
This concludes the analogue of Kac's heuristic for $\mathbb{P}_{n}$
and justifies the parameters of the normal distribution appearing
in Theorem 2.

\section{Proving Theorem 2}

Define $\alpha_{n}\coloneqq n^{1/\log\log n}$.
\begin{lem}
As $n\to \infty$
\[
\left(\sum_{\alpha_{n}<p\le n}\left(\frac{1}{p}+\sum_{l=1}^{\left\lfloor \frac{n}{p}\right\rfloor }\varepsilon_{lp,n}\right)\right)/\left(\log\log n\right)^{1/2}\to0.
\]
\end{lem}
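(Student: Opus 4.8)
The plan is to split each summand into its main term $\frac{1}{p}$ and its perturbation term $\sum_{l=1}^{\left\lfloor n/p\right\rfloor }\varepsilon_{lp,n}$, bound the whole sum from above using constraint $\left(\ref{eq:5}\right)$, and then estimate the partial sum $\sum_{\alpha_{n}<p\le n}\frac{1}{p}$ by Merten's Second Theorem. The key opening observation is that each summand is already nonnegative: exactly as computed in $\S2$,
\[
\mathbb{P}_{n}\left(A_{p}\right)=\frac{\left\lfloor \frac{n}{p}\right\rfloor }{n}+\sum_{l=1}^{\left\lfloor \frac{n}{p}\right\rfloor }\varepsilon_{lp,n},
\]
and since $\frac{\left\lfloor n/p\right\rfloor }{n}\le\frac{1}{p}$ while $\mathbb{P}_{n}\left(A_{p}\right)\ge0$, we get $\frac{1}{p}+\sum_{l=1}^{\left\lfloor n/p\right\rfloor }\varepsilon_{lp,n}\ge\mathbb{P}_{n}\left(A_{p}\right)\ge0$. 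Thus the quantity in the lemma is a sum of nonnegative terms, so it suffices to bound it from above; this is exactly what lets us get away with the one-sided estimate in $\left(\ref{eq:5}\right)$.

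Next I would apply $\left(\ref{eq:5}\right)$. Every prime in the range of summation satisfies $p>\alpha_{n}=n^{1/\log\log n}$, so $\sum_{l=1}^{\left\lfloor n/p\right\rfloor }\varepsilon_{lp,n}\le\frac{C}{p}$, and hence
\[
0\le\sum_{\alpha_{n}<p\le n}\left(\frac{1}{p}+\sum_{l=1}^{\left\lfloor \frac{n}{p}\right\rfloor }\varepsilon_{lp,n}\right)\le\left(1+C\right)\sum_{\alpha_{n}<p\le n}\frac{1}{p}.
\]
By Merten's Second Theorem, $\sum_{\alpha_{n}<p\le n}\frac{1}{p}=\log\log n-\log\log\alpha_{n}+O\left(1\right)$; and since $\log\alpha_{n}=\frac{\log n}{\log\log n}$ gives $\log\log\alpha_{n}=\log\log n-\log\log\log n$, this partial sum equals $\log\log\log n+O\left(1\right)$.

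Putting the pieces together, the expression in the lemma is nonnegative and at most $\left(1+C\right)\left(\log\log\log n+O\left(1\right)\right)/\left(\log\log n\right)^{1/2}$, which tends to $0$ since $\log\log\log n=o\!\left(\left(\log\log n\right)^{1/2}\right)$. This establishes the lemma. The only delicate point is that $\left(\ref{eq:5}\right)$ bounds the inner $\varepsilon$-sum from above only; that is precisely why the nonnegativity observation is made first, supplying the missing lower bound for free. Otherwise the argument is the standard ``truncate the sum over primes at $\alpha_{n}$'' step from Durrett's proof of Theorem 1, with Merten's Second Theorem as the only number-theoretic ingredient.
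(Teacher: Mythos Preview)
Your proof is correct and follows essentially the same route as the paper: establish that each summand $\tfrac{1}{p}+\sum_{l}\varepsilon_{lp,n}$ is nonnegative, bound it above by $(1+C)/p$ via constraint~(\ref{eq:5}), and then use Mertens to show $\sum_{\alpha_n<p\le n}\tfrac{1}{p}=o\big((\log\log n)^{1/2}\big)$. Your nonnegativity argument via $\mathbb{P}_n(A_p)\ge 0$ is just a repackaging of the paper's use of~(\ref{eq:4}), and your explicit Mertens computation $\log\log\log n+O(1)$ is what the paper defers to Durrett; incidentally, the paper labels the upper bound as coming from~(\ref{eq:6}) with constant $D$, whereas you correctly invoke~(\ref{eq:5}) with constant $C$, which is the hypothesis that actually applies to primes $p>\alpha_n$.
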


\begin{proof}
\noindent Given $n$ and any prime $p$ with $p>\alpha_{n}$, 
\[
-\frac{\left\lfloor \frac{n}{p}\right\rfloor }{n}\overset{\left(\ref{eq:4}\right)}{\le}\sum_{l=1}^{\left\lfloor \frac{n}{p}\right\rfloor }\varepsilon_{lp,n}\overset{\left(\ref{eq:6}\right)}{\le}\frac{D}{p}.
\]
 Therefore,
\begin{equation}
\frac{1}{p}+\sum_{l=1}^{\left\lfloor \frac{n}{p}\right\rfloor }\varepsilon_{lp,n}\in\left[0,\frac{D+1}{p}\right]\label{eq:7}
\end{equation}
for all $n$. Thus,
\[
\left(\sum_{\alpha_{n}<p\le n}\left(\frac{1}{p}+\sum_{l=1}^{\left\lfloor \frac{n}{p}\right\rfloor }\varepsilon_{lp,n}\right)\right)/\left(\log\log n\right)^{1/2}\to0
\]
due to $\left(\ref{eq:7}\right)$ along with the fact that Durrett (p.
135 of \cite{key-1}) shows 
\[
\left(\sum_{\alpha_{n}<p\le n}\frac{1}{p}\right)/\left(\log\log n\right)^{1/2}\to0.
\]
This proves Lemma 3.
\end{proof}
The following lemma is proved by Durrett (p. 156 of \cite{key-1}).
\begin{lem}
If $\varepsilon>0$, then $\alpha_{n}\le n^{\varepsilon}$ for large
$n$ and hence 
\begin{equation}
\frac{\alpha_{n}^{r}}{n}\to0\label{eq:8}
\end{equation}
for all $r<\infty$.
\end{lem}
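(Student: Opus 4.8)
The plan is to derive both halves of the statement from the single elementary fact that $\log\log n\to\infty$ as $n\to\infty$. First I would establish the inequality $\alpha_{n}\le n^{\varepsilon}$ for large $n$. Since $\alpha_{n}=n^{1/\log\log n}$ and $\log\log n>0$ once $n>e^{e}$, we may take logarithms: the inequality $\alpha_{n}\le n^{\varepsilon}$ is equivalent to $\tfrac{1}{\log\log n}\le\varepsilon$, i.e.\ to $\log\log n\ge\tfrac{1}{\varepsilon}$. Because $\log\log n\to\infty$, this holds for all sufficiently large $n$, which proves the first assertion.

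For the second assertion, fix $r<\infty$; we may assume $r>0$, since otherwise $\alpha_{n}\ge 1$ gives $\alpha_{n}^{r}\le 1$ and hence $\alpha_{n}^{r}/n\le 1/n\to 0$ at once. With $r>0$, I would apply the first part with $\varepsilon=\tfrac{1}{2r}$: for all large $n$ it yields $\alpha_{n}\le n^{1/(2r)}$, so that $\alpha_{n}^{r}\le n^{1/2}$ and therefore $\alpha_{n}^{r}/n\le n^{-1/2}\to 0$. (Equivalently, one may note directly that $\alpha_{n}^{r}/n=n^{\,r/\log\log n-1}$, whose exponent tends to $-1$.) Hence $\alpha_{n}^{r}/n\to 0$ for every $r<\infty$.

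I expect essentially no obstacle here: the argument is bookkeeping with the iterated logarithm. The only points that deserve a word of care are restricting to $n$ large enough that $\log\log n$ is defined and positive before taking logarithms, and disposing of the degenerate range $r\le 0$ separately. Neither affects the conclusion, since both claims concern only the behavior as $n\to\infty$.
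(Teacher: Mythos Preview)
Your argument is correct; it is the standard elementary derivation from $\log\log n\to\infty$, and there is no gap. Note that the paper itself does not prove this lemma but simply cites Durrett, so your proof is at least as complete as (and presumably identical in spirit to) the reference the paper defers to.
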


\begin{proof}[Proof of Theorem 2]
\noindent Let $g_{n}\left(m\right)=\sum_{p\le\alpha_{n}}\delta_{p}\left(m\right)$
and let $\mathbb{E}_{n}$ denote expectation with respect to $\mathbb{P}_{n}^{*}$.
Then

\begin{align*}
\mathbb{E}_{n}\left(\sum_{\alpha_{n}<p\le n}\delta_{p}\right) & =\sum_{\alpha_{n}<p\le n}\mathbb{P}_{n}^{*}\left(m:\delta_{p}\left(m\right)=1\right)\\
 & =\sum_{\alpha_{n}<p\le n}\mathbb{P}_{n}^{*}\left(m:m=p,2p,\ldots,\left\lfloor \frac{n}{p}\right\rfloor p\right)\\
 & =\sum_{\alpha_{n}<p\le n}\sum_{l=1}^{\left\lfloor \frac{n}{p}\right\rfloor }\mathbb{P}_{n}^{*}\left(m:m=lp\right)\\
 & \overset{\left(\ref{eq:1}\right)}{=}\sum_{\alpha_{n}<p\le n}\sum_{l=1}^{\left\lfloor \frac{n}{p}\right\rfloor }\left(\frac{1}{n}+\varepsilon_{lp,n}\right)\\
 & =\sum_{\alpha_{n}<p\le n}\sum_{l=1}^{\left\lfloor \frac{n}{p}\right\rfloor }\frac{1}{n}+\sum_{\alpha_{n}<p\le n}\sum_{l=0}^{\left\lfloor \frac{n}{p}\right\rfloor }\varepsilon_{lp,n}\\
 & =\sum_{\alpha_{n}<p\le n}\frac{\left\lfloor \frac{n}{p}\right\rfloor }{n}+\sum_{\alpha_{n}<p\le n}\sum_{l=1}^{\left\lfloor \frac{n}{p}\right\rfloor }\varepsilon_{lp,n}\\
 & \le\sum_{\alpha_{n}<p\le n}\frac{1}{p}+\sum_{\alpha_{n}<p\le n}\sum_{l=1}^{\left\lfloor \frac{n}{p}\right\rfloor }\varepsilon_{lp,n},
\end{align*}
so by Lemma 3 it suffices to prove Theorem 2 for $g_{n}$; i.e.,
replacing $\omega\left(m\right)$ with $g_{n}\left(m\right)$
in the statement of Theorem 2 does not affect the limiting distribution. 

Let
\begin{eqnarray*}
S_{n} & \coloneqq & \sum_{p\le\alpha_{n}}X_{p},\\
b_{n} & \coloneqq & \mathbb{E}\left(S_{n}\right),\\
a_{n}^{2} & \coloneqq & \text{Var}\left(S_{n}\right).
\end{eqnarray*}
By Lemma 3, $b_{n}$ and $a_{n}^{2}$ are both $\log\log n+o\left(\left(\log\log n\right)^{1/2}\right)$,
so it suffices to show
\[
\mathbb{P}_{n}^{*}\left(m:g_{n}\left(m\right)-b_{n}\le xa_{n}\right)\to\mathbb{P}\left(Z\le x\right).
\]
An application of Theorem 3.4.5 of \cite{key-1} shows $\left(S_{n}-b_{n}\right)/a_{n}\to Z$,
and since $\left|X_{p}\right|\le1$, it follows from Durrett's second
proof of Theorem 3.4.5 \cite{key-1} that $\mathbb{E}\left(\left(S_{n}-b_{n}\right)/a_{n}\right)^{r}\to\mathbb{E}\left(Z^{r}\right)$
for all $r$. Using the notation from that proof (and replacing $i_{j}$
by $p_{j}$) it follows that
\begin{align*}
\mathbb{E}\left(S_{n}^{r}\right) & =\sum_{k=1}^{r}\sum_{r_{i}}\frac{r!}{r_{1}!\cdots r_{k}!}\frac{1}{k!}\sum_{p_{j}}\mathbb{E}\left(X_{p_{1}}^{r_{1}}\cdots X_{p_{k}}^{r_{k}}\right),
\end{align*}
where the sum $\sum_{r_{i}}$ extends over all $k$-tuples of positive
integers for which $r_{1}+\cdots+r_{k}=r$, and $\sum_{p_{j}}$ extends
over all $k$-tuples of distinct primes in $\left[n\right]$. Since
$X_{p}\in\left\{ 0,1\right\} $, the summand in $\sum_{p_{j}}\mathbb{E}\left(X_{p_{1}}^{r_{1}}\cdots X_{p_{k}}^{r_{k}}\right)$
is
\[
\mathbb{E}\left(X_{p_{1}}\cdots X_{p_{k}}\right)=\frac{1}{p_{1}\cdots p_{k}}
\]
by independence of the $X_{p}$'s. Moreover,
\begin{align*}
\mathbb{E}_{n}\left(\delta_{p_{1}}\cdots\delta_{p_{k}}\right) & \le\mathbb{P}_{n}\left(m:\delta_{p_{1}}\left(m\right)=\delta_{p_{2}}\left(m\right)\cdots=\delta_{p_{k}}\left(m\right)=1\right)\\
 & =\mathbb{P}_{n}\left(m:m=p_{1}\cdots p_{k},2p_{1}\ldots p_{k},\ldots,\left\lfloor \frac{n}{p_{1}\cdots p_{k}}\right\rfloor p_{1}\cdots p_{k}\right)\\
 & =\sum_{l=1}^{\left\lfloor \frac{n}{p_{1}\cdots p_{k}}\right\rfloor }\mathbb{P}_{n}\left(m:m=lp_{1}\cdots p_{k}\right)\\
 & \overset{\left(1\right)}{=}\sum_{l=1}^{\left\lfloor \frac{n}{p_{1}\cdots p_{k}}\right\rfloor }\left(\frac{1}{n}+\varepsilon_{lp_{1}\cdots p_{k},n}\right)\\
 & =\frac{\left\lfloor \frac{n}{p_{1}\cdots p_{k}}\right\rfloor }{n}+\sum_{l=1}^{\left\lfloor \frac{n}{p_{1}\cdots p_{k}}\right\rfloor }\varepsilon_{lp_{1}\cdots p_{k},n}.
\end{align*}
The two moments differ by at most 
\[
\text{max}\left\{ \frac{1}{p_{1}\cdots p_{k}}-\frac{\left\lfloor \frac{n}{p_{1}\cdots p_{k}}\right\rfloor }{n}-\sum_{l=1}^{\left\lfloor \frac{n}{p_{1}\cdots p_{k}}\right\rfloor }\varepsilon_{lp_{1}\cdots p_{k},n},\frac{\left\lfloor \frac{n}{p_{1}\cdots p_{k}}\right\rfloor }{n}+\sum_{l=1}^{\left\lfloor \frac{n}{p_{1}\cdots p_{k}}\right\rfloor }\varepsilon_{lp_{1}\cdots p_{k},n}-\frac{1}{p_{1}\cdots p_{k}}\right\} .
\]
Further, 
\begin{align*}
\frac{1}{p_{1}\cdots p_{k}}-\frac{\left\lfloor \frac{n}{p_{1}\cdots p_{k}}\right\rfloor }{n}-\sum_{l=1}^{\left\lfloor \frac{n}{p_{1}\cdots p_{k}}\right\rfloor }\varepsilon_{lp_{1}\cdots p_{k},n} & \le\frac{1}{p_{1}\cdots p_{k}}-\frac{\frac{n}{p_{1}\cdots p_{k}}-1}{n}-\sum_{l=1}^{\left\lfloor \frac{n}{p_{1}\cdots p_{k}}\right\rfloor }\varepsilon_{lp_{1}\cdots p_{k},n}\\
 & =\frac{1}{n}-\sum_{l=1}^{\left\lfloor \frac{n}{p_{1}\cdots p_{k}}\right\rfloor }\varepsilon_{lp_{1}\cdots p_{k},n},
\end{align*}
and 
\begin{align*}
\frac{\left\lfloor \frac{n}{p_{1}\cdots p_{k}}\right\rfloor }{n}+\sum_{l=1}^{\left\lfloor \frac{n}{p_{1}\cdots p_{k}}\right\rfloor }\varepsilon_{lp_{1}\cdots p_{k},n}-\frac{1}{p_{1}\cdots p_{k}} & \le\sum_{l=1}^{\left\lfloor \frac{n}{p_{1}\cdots p_{k}}\right\rfloor }\varepsilon_{lp_{1}\cdots p_{k},n}.
\end{align*}
Thus, the maximum becomes
\[
\text{max}\left\{ \frac{1}{n}-\sum_{l=1}^{\left\lfloor \frac{n}{p_{1}\cdots p_{k}}\right\rfloor }\varepsilon_{lp_{1}\cdots p_{k},n},\sum_{l=1}^{\left\lfloor \frac{n}{p_{1}\cdots p_{k}}\right\rfloor }\varepsilon_{lp_{1}\cdots p_{k},n}\right\} .
\]
The maximum equals $\frac{1}{n}-\sum_{l=1}^{\left\lfloor \frac{n}{p_{1}\cdots p_{k}}\right\rfloor }\varepsilon_{lp_{1}\cdots p_{k},n}$
if and only if $\sum_{l=1}^{\left\lfloor \frac{n}{p_{1}\cdots p_{k}}\right\rfloor }\varepsilon_{lp_{1}\cdots p_{k},n}\le\frac{1}{n}-\sum_{l=1}^{\left\lfloor \frac{n}{p_{1}\cdots p_{k}}\right\rfloor }\varepsilon_{lp_{1}\cdots p_{k},n}$,
and the latter is equivalent to $\sum_{l=1}^{\left\lfloor \frac{n}{p_{1}\cdots p_{k}}\right\rfloor }\varepsilon_{lp_{1}\cdots p_{k},n}\le\frac{1/2}{n}$.
On the other hand, if the maximum equals $\sum_{l=1}^{\left\lfloor \frac{n}{p_{1}\cdots p_{k}}\right\rfloor }\varepsilon_{lp_{1}\cdots p_{k},n},$
then inequality $\left(\ref{eq:5}\right)$ implies $\sum_{l=1}^{\left\lfloor \frac{n}{p_{1}\cdots p_{k}}\right\rfloor }\varepsilon_{lp_{1}\cdots p_{k},n}\le\frac{C}{n}$.
Therefore, the two $r$th moments differ by

{\small{}
\begin{eqnarray*}
\left|\mathbb{E}\left(S_{n}^{r}\right)-\mathbb{E}_{n}\left(g_{n}^{r}\right)\right| & \le & \sum_{k=1}^{r}\sum_{r_{i}}\frac{r!}{r_{1}!\cdots r_{k}!}\frac{1}{k!}\sum_{p_{j}}\left(\text{max}\left\{ \frac{1}{n}-\sum_{l=1}^{\left\lfloor \frac{n}{p_{1}\cdots p_{k}}\right\rfloor }\varepsilon_{lp_{1}\cdots p_{k},n},\sum_{l=1}^{\left\lfloor \frac{n}{p_{1}\cdots p_{k}}\right\rfloor }\varepsilon_{lp_{1}\cdots p_{k},n}\right\} \right)\\
 & \le & \sum_{k=1}^{r}\sum_{r_{i}}\frac{r!}{r_{1}!\cdots r_{k}!}\frac{1}{k!}\sum_{p_{j}}\frac{\max\left\{ C,\frac{1}{2}\right\} }{n}\\
 & \le & \frac{\max\left\{ C,\frac{1}{2}\right\} }{n}\left(\sum_{p\le\alpha_{n}}1\right)^{r}\\
 & \le & \max\left\{ C,\frac{1}{2}\right\} \frac{\alpha_{n}^{r}}{n}\\
 & \overset{\left(8\right)}{\to} & 0.
\end{eqnarray*}
}{\small\par}
This completes the proof of Theorem 2.
\end{proof}

\section{Illustrative Examples}
In this section, examples are given to help give a description of the class of distributions $\mathbb{P}^{*}_n$. It will be shown that the statement of Theorem 2 holds when $\mathbb{P}_{n}^{*}$ is replaced with either the Harmonic$\left(n\right)$ or Zipf$(n,s)$ distributions as long as $s\to 1$ for the latter.
\begin{example}
Given $n\in\mathbb{N},$ let $Q_{n}$ denote the Harmonic$\left(n\right)$
distribution on $\left[n\right]$ so that 
\[
Q_{n}\left(i\right)=\mathbf{1}_{\left\{ i\in\left[n\right]\right\} }\frac{1}{i\sum_{i=1}^{n}\frac{1}{i}}.
\]
In order to apply Theorem 2 to $Q_{n}$, it will be shown that the terms
$\varepsilon_{i,n}$ corresponding to $\mathbb{P}_{n}=Q_{n}$ satisfy
$\left(\ref{eq:2}\right)$, $\left(\ref{eq:5}\right)$, and $\left(\ref{eq:6}\right)$.
If $i\in\left[n\right]$, then $\left(\ref{eq:1}\right)$ implies
$\varepsilon_{i,n}=\frac{1}{i\sum_{i=1}^{n}\frac{1}{i}}-\frac{1}{n}$.
The left hand side of $\left(\ref{eq:2}\right)$ is 
\begin{align*}
\lim_{n\to\infty}\sum_{l=1}^{\left\lfloor \frac{n}{p_{1}\cdots p_{k}}\right\rfloor }\varepsilon_{lp_{1}\cdots p_{k},n} & =\lim_{n\to\infty}\sum_{l=1}^{\left\lfloor \frac{n}{p_{1}\cdots p_{k}}\right\rfloor }\left(\frac{1}{lp_{1}\cdots p_{k}\sum_{i=1}^{n}\frac{1}{i}}-\frac{1}{n}\right)\\
 & =\lim_{n\to\infty}\left(\frac{\sum_{l=1}^{\left\lfloor \frac{n}{p_{1}\cdots p_{k}}\right\rfloor }\frac{1}{l}}{p_{1}\cdots p_{k}\sum_{i=1}^{n}\frac{1}{i}}-\frac{\left\lfloor \frac{n}{p_{1}\cdots p_{k}}\right\rfloor }{n}\right)\\
 & =\frac{1}{p_{1}\cdots p_{k}}-\frac{1}{p_{1}\cdots p_{k}}\\
 & =0.
\end{align*}
The left hand side of $\left(\ref{eq:5}\right)$ becomes 
\begin{align*}
\sum_{l=1}^{\left\lfloor \frac{n}{p}\right\rfloor }\varepsilon_{lp,n} & =\sum_{l=1}^{\left\lfloor \frac{n}{p}\right\rfloor }\left(\frac{1}{lp\sum_{i=1}^{n}\frac{1}{i}}-\frac{1}{n}\right)\\
 & =\frac{\sum_{i=1}^{\left\lfloor \frac{n}{p}\right\rfloor }\frac{1}{i}}{p\sum_{l=1}^{n}\frac{1}{i}}-\frac{\left\lfloor \frac{n}{p}\right\rfloor }{n}\\
 & \le\frac{1}{p}-\left(\frac{\frac{n}{p}-1}{n}\right)\\
 & \le\frac{1}{n}\\
 & \le\frac{1}{p},
\end{align*}
so $\left(\ref{eq:5}\right)$ holds with $C=1$. Further, the
left hand side of $\left(\ref{eq:6}\right)$ becomes
\begin{align*}
\sum_{l=1}^{\left\lfloor \frac{n}{p_{1}\cdots p_{k}}\right\rfloor }\varepsilon_{lp_{1}\cdots p_{k},n} & =\sum_{l=1}^{\left\lfloor \frac{n}{p_{1}\cdots p_{k}}\right\rfloor }\left(\frac{1}{lp_{1}\cdots p_{k}\sum_{i=1}^{n}\frac{1}{i}}-\frac{1}{n}\right)\\
 & =\frac{\sum_{i=1}^{\left\lfloor \frac{n}{p_{1}\cdots p_{k}}\right\rfloor }\frac{1}{i}}{p_{1}\cdots p_{k}\sum_{i=1}^{n}\frac{1}{i}}-\frac{\left\lfloor \frac{n}{p_{1}\cdots p_{k}}\right\rfloor }{n}\\
 & \le\frac{1}{p_{1}\cdots p_{k}}-\left(\frac{\frac{n}{p_{1}\cdots p_{k}}-1}{n}\right)\\
 & =\frac{1}{n},
\end{align*}
so $\left(\ref{eq:6}\right)$ holds with $D=1$. This proves the
analogue of the Erd\H{o}s-Kac Theorem in the case of an integer-valued
random variable with the Harmonic$\left(n\right)$ distribution by
Theorem 2. 

\noindent \hfill{}$\square$
\end{example}

\begin{example}
Given $s>1$, denote by $Z_{s}$ the Zeta$\left(s\right)$ distribution
so that for any $j\in\mathbb{N}$, $Z_{s}\left(j\right)=\frac{1}{j^{s}\zeta\left(s\right)}$,
where $\zeta\left(j\right)=\sum_{j\ge1}\frac{1}{j^{s}}$ denotes the
Riemann zeta function. Since Theorem 2 involves distributions defined
on $\left[n\right]$, restrict the Zeta$\left(s\right)$ distribution
to $\left[n\right]$ and then normalize by dividing by $\sum_{i=1}^{n}\frac{1}{i^{s}\zeta\left(s\right)}$.
I.e., for $j\in\left[n\right]$,
\begin{align*}
Z_{s,n}\left(j\right) & \coloneqq\frac{\frac{1}{j^{s}\zeta\left(s\right)}}{\sum_{i=1}^{n}\frac{1}{i^{s}\zeta\left(s\right)}}\\
 & =\frac{1}{j^{s}\sum_{i=1}^{n}\frac{1}{i^{s}}};
\end{align*}
and $Z_{s,n}$ is known as the Zipf distribution with parameters $n$
and $s$. As $s\to1$,
the Zipf$\left(n,s\right)$ distribution $Z_{s,n}$ converges in distribution
to the Harmonic$\left(n\right)$ distribution $Q_{n}$. By Example 5, this ensures
that constraints $\left(\ref{eq:2}\right)$, $\left(\ref{eq:5}\right)$,
and $\left(\ref{eq:6}\right)$ will hold for $Z_{s,n}$ for $s$ sufficiently
close to $1$ and dependent on $n$. Therefore, the number of distinct
prime factors of a Zipf($s,n$) distribution tends to $\mathcal{N}\left(\log\log n,\log\log n\right)$
as $n\to\infty$ and $s\to1$.

\noindent \hfill{}$\square$
\end{example}

\section{Conclusion}
Theorem 2 generalizes the Erd\H{o}s-Kac Theorem to a larger family of distributions beyond the uniform distribution, and this theorem was proved by imposing the constraints $\left(\ref{eq:2}\right)$, $\left(\ref{eq:5}\right)$, and $\left(\ref{eq:6}\right)$ on $\mathbb{P}_n$. There are several ways to strengthen Theorem 2. As demonstrated in $\S$2, equation $\left(\ref{eq:2}\right)$ is equivalent to the independence
of the events $p_{1}\mathbb{N},\ldots,p_{k}\mathbb{N}$, with respect to $\lim_{n\to \infty}\mathbb{P}_{n}$, for distinct primes $p_{1},\ldots,p_{k}$. Moreover, independence naturally leads to a Gaussian law for the limiting distribution. Therefore, by weakening $\left(\ref{eq:2}\right)$, the limiting distribution will either not be a Gaussian or will consists of parameters other than $\mu=\sigma ^2=\log \log n$ in case it is a Gaussian. Example 5 provides a technique for which Theorem 2 can be applied to a particular
distribution on $\left[n\right]$ in order to conclude that the distribution of $\omega\left(\cdot\right)$ is asymptotically $\mathcal{N}\left(\log \log n, \log \log n\right)$. Providing further examples, or counterexamples, of familiar distributions will provide a better description of the class of distributions determined by $\left(\ref{eq:2}\right)$, $\left(\ref{eq:5}\right)$, and $\left(\ref{eq:6}\right)$.
Inequalities $\left(\ref{eq:5}\right)$ and $\left(\ref{eq:6}\right)$ were applied in order to apply the method of moments approach given by Durrett; the question remains as to the extent to which these inequalities can be weakened in order to still obtain sufficient bounds in the method of moments estimates.

Another way to generalize Theorem 2 would be to incorporate it with
other generalizations, e.g., \cite{key-4,key-5,key-6,key-7}. By incorporating Theorem 2 with these generalizations, further generalizations can be made in which the original setting is not $\left[n\right]$, the underlying distribution of the random-integer is not uniform, and $\omega\left(n\right)$ can be replaced with a more general function
$\omega\left(f\left(n\right)\right)$.


\begin{thebibliography}{1}
\bibitem{key-1} Durrett, R. Probability: Theory and Examples, 5th
Ed, Cambridge University Press, 2019.

\bibitem{key-2} Erd\H{o}s, P; Kac, M. The Gaussian
Law of Errors in the Theory of Additive Number Theoretic Functions.
American Journal of Mathematics. 62 (1/4): 738--742, 1940.

\bibitem{key-3} Hardy, G. H.; Ramanujan, S. The normal
number of prime factors of a number $n$, Quarterly
Journal of Mathematics, 48: 76--92, 1917.

\bibitem[4]{key-4} Liu, R. A Generalization of the Erd\H{o}s-Kac Theorem
and its Applications. Canad. Math. Bull. Vol. 47 (4), pp. 589--606, 2004.

\bibitem[5]{key-5} Pollack, P., An elemental Erd\H{o}s-Kac theorem
for algebraic number fields.

\bibitem[6]{key-6} Saidak, F., Arch. Math. 85, pp. 345--361, Birkhauser
Verlag, Basel, 2005.

\bibitem[7]{key-7} Sun, Y.; Wu, L., Generalization of  Erd\H{o}s-Kac
theorem, Front. Math. China, 14(6): 1303--1316, 2019.

\bibitem[8]{key-8} Tenenbaum, G.;  Mendes-France, M. The Prime
Numbers and their Distribution. Providence, RI: Amer. Math. Soc.,
2000.
\end{thebibliography}
\end{document}